\title{The structure of Gram matrices of sum of squares polynomials with restricted harmonic support}
\author{Mitchell Tong Harris\\ \href{mitchh@mit.edu}{mitchh@mit.edu}  
}
\begin{document}

\maketitle

\makeatletter
\def\blfootnote{\gdef\@thefnmark{}\@footnotetext}
\makeatother
\blfootnote{The author is with the Laboratory for Information and Decision Systems (LIDS) and Mathematics Department, Massachusetts Institute of Technology, Cambridge MA 02139. The author was supported by the National Science Foundation Graduate Research Fellowship under Grant No. 2141064.}

\begin{abstract}
Some sum of squares (SOS) polynomials admit decomposition certificates, or positive semidefinite Gram matrices, with additional structure. In this work, we use the structure of Gram matrices to relate the representation theory of $SL(2)$ to $SO(2)$. Informally, we prove that if $p$ is a sum of squares and lives in some of the invariant subspaces of $SO(2)$, then it has a positive semidefinite Gram matrix that lives in certain invariant subspaces of $SL(2)$. The tools used in the proof construction are of independent interest.
\end{abstract}

\section{Introduction}
%Irreducible representations play an integral part of representation theory. Often, the details are discussed at a group level and siloed from the irreducible representations (irreps) of other groups. In this work we present a remarkable correspondence between irreps of two groups connected via polynomials. 
% sum of squares
% equivariant maps 

Spaces of homogeneous polynomials are commonly used as representing spaces for matrix groups, yet little attention is paid to the additional structure the polynomials admit as \emph{functions}. In this work, we leverage a characteristic particular to functions -- that of global nonnegativity -- to provide a new correspondence between representations of $SL(2)$ and $SO(2)$. 

Representation theory is a rich subject with many excellent books of varying scopes. Some span theory that applies to large classes of groups \cite{etingof2011introduction, fulton2013representation}. The theory of Lie groups is sufficiently mature that still more books are devoted entirely to these structures \cite{humphreys2012introduction, knapp1996lie}. Finally, some groups are so nuanced that their representation theory is treated on its own \cite{lang2012sl2}. When this level of granularity is reached, often the story ends there.

Our study on the structure of sums of squares (SOS) of polynomials provides a new link between the representation theory of two particular groups. The special structure that polynomial spaces admit is that some polynomials may be nonnegative or SOS. The study of the proxy of SOS for nonnegativity goes back to Hilbert \cite{hilbert1888ueber}, and more recently has become more popular due to advances in algorithmically checking this condition that make it tractable \cite{parrilo2000structured,lasserre2001global}.

There are many connections between the study of symmetry with the study of SOS polynomials. Most fundamentally, the property of being SOS is coordinate-free -- applying any change of coordinates to the original polynomial results in an equivalent transformation of the coordinates of the SOS decomposition. This property was leveraged in \cite{lawrence2023learning}. A more sophisticated exploitation of symmetry of SOS polynomials to accelerate computation was carried out in \cite{gatermann2004symmetry}. Beyond leveraging symmetry for practical improvements, the structure of sum of squares decomposition has been relatively underexplored. One such advance provided in \cite{fawzi2016sparse} relates the Fourier support of the original polynomial to the Fourier support of those in the SOS decomposition, where both Fourier decompositions are with respect to the same group. In this work, we explore the structure of an SOS decomposition as it relates to a group different from the one that is used to characterize the original polynomial.

We prove a correspondence between representations of $SL(2)$ and $SO(2)$ that is roughly of the following form.
\begin{quote}If $p$ is a sum of squares and lives in some of the invariant subspaces of $SO(2)$, then it has a positive semidefinite Gram matrix that lives in certain invariant subspaces of $SL(2)$. 
\end{quote}
The precise version of this statement and its proof are given in Section~\ref{sec:gram-matrix-structure}. We build up to this result by providing the necessary background in Section~\ref{sec:preliminaries}. Finally, the two major proof tools, which are not original but deserve expounding on because of their versatility, are presented in Section~\ref{sec:tools}. 

\section{Preliminaries}\label{sec:preliminaries}

In this section we recall some background about polynomials, groups, and their representation theory. 
\subsection{Polynomials} 
We establish a connection between two Lie groups via polynomials. On the one hand, polynomial spaces are generic vector spaces of appropriate dimension, but on the other their properties as functions afford a special richness. We let $F(n,d)$ be the set of degree $d$ homogeneous polynomials in $n$ variables with real coefficients, commonly referred to as $n$-ary $d$-forms. 

\subsubsection{Matrix group actions} Throughout this paper, all groups act on polynomials the same way. Let $G$ be a matrix group. For any $A \in G$ and $p \in F(n,d)$, the group action is given by $A(p(x)) = p(A^{-1}x)$.

The group action is a linear map on the polynomial coefficients, which we want to write as an explicit matrix. To do this we make use of the following lifting operation. Let $N = {n + d -1 \choose d}$. The map $\mathbb{R}^n \to \mathbb{R}^N$ denoted $x^{[d]}$ sends $x$ to the vector whose entries are the monomial basis of $F(n,d)$. For example, \begin{equation*}
    \pmat{x_1 \\x_2}^{[1]} = \pmat{x_1 \\x_2}, \qquad \pmat{x_1 \\x_2}^{[2]} = \pmat{x_1^2 \\ x_1x_2 \\ x_2^2}, \qquad  \pmat{x_1 \\x_2}^{[3]} = \pmat{x_1^3 \\ x_1^2x_2 \\ x_1 x_2^2\\ x_2^3}.
\end{equation*}
Often it is convenient to instead work in the \emph{scaled monomial basis}, in which $x^\alpha$ is scaled by $\sqrt{\frac{(\sum_i \alpha_i)!}{\prod_i (\alpha_i!)}}$. For example $\pmat{x_1 & x_2}^{[d]^T} = \pmat{x_1^2 & \sqrt{2} x_1 x_2 & x_2^2}^T$. We will switch between the two bases depending on which is more convenient; if not stated the bases is either irrelevant or clear from context. In either case, a simple diagonal transformation allows us to convert between them.

Furthermore, if $A$ is $n \times n$, then the matrix $A^{[d]}$ is defined as the unique matrix such that 
\begin{equation*}
    A^{[d]}x^{[d]} = (Ax)^{[d]}
\end{equation*}
for any $x \in \mathbb{R}^n$. There exists an explicit description of the map in terms of the entries of $A$. See \cite{marcus1992survey, marcus1973finite, parrilo2008approximation} for further details. Finally we see that if $v$ is the vector of coefficients of the polynomial $p$, then $v^T x^{[d]} = p(x).$ Therefore, \begin{equation}\label{eqn:explicitaction} p(Ax) = v^T (Ax)^{[d]} = v^T A^{[d]^T} x^{[d]} = (A^{[d]}v)^T x^{[d]}, \end{equation} and so the transformed coefficients are $A^{[d]} v$.

\subsubsection{Sums of squares}
A polynomial $p \in F(n,2d)$ is a \emph{sum of squares} if it can be written as
\begin{equation*}
p = \sum_{j=1}^J q_j^2,
\end{equation*}
where each $q_j \in F(n, d)$. As exploited by \cite{parrilo2000structured, lasserre2001global}, a polynomial $p \in F(n,2d)$ is a sum of squares if and only if there exists an $N \times N$ positive semidefinite (psd) matrix $Q$ such that
\begin{equation} \label{eqn:sosgrammatrix}
p(x) = x^{[d]^T} Q x^{[d]}.
\end{equation}
In this case, $Q$ is a sum of squares decomposition certificate. We call any matrix satisfying \eqref{eqn:sosgrammatrix} a \emph{Gram} matrix for $p$ and sometimes denote such a matrix as $M(p)$. 

It is natural to ask how a Gram matrix transforms when the polynomial undergoes a matrix group action. Given a Gram matrix for $p(x)$ we can directly compute a Gram matrix for $p(Ax)$. Substituting in \eqref{eqn:sosgrammatrix}, we get 
\begin{equation} \label{eqn:gram-matrix-equivariance}
    p(Ax) = (Ax)^{[d]^T}Q (Ax)^{[d]} = x^{[d]^T}(A^{[d]^T}Q A^{[d]})x^{[d]},
\end{equation} which proves $A^{[d]^T}Q A^{[d]}$ is a Gram matrix.
Furthermore, congruence transformations preserve positive semidefiniteness, so if $Q$ is psd, then so is $A^{[d]^T}Q A^{[d]}$.

\subsection{Representation theory}\label{sec:background-rep-theory}
We recall some basic Harmonic analysis and representation theory of $SL(2, \mathbb{R})$. Throughout the paper we often drop the $\mathbb{R}$ because we exclusively work with the real versions of these groups.

\subsubsection{Harmonic analysis}
Let $H_{n,k} = \{p \in F_{n,d} \, : \, \nabla p = 0\}$, where $\nabla = \sum_{i=1}^n \frac{\partial^2}{\partial x_i^2}$. Let $\Vert x \Vert^2 = \sum_{i=1}^n x_i^2$. Then any polynomial $p \in F_{n, 2d}$ has a unique \emph{Harmonic} decomposition as 
\begin{equation}
p = p_0 \Vert x \Vert^{2d} + p_2 \Vert x \Vert^{2(d-1)} + \cdots + p_{2d},
\end{equation}
where $p_k \in H_{n,k}$. This famous result is Theorem 2.1 in \cite{stein1971introduction} and Theorem 5.7 in \cite{axler2013harmonic}.

Each of the spaces $H_{n,k}$ is special because of their relationship with $SO(n, \mathbb{R})$; they are irreducible invariant subspaces of $F(n,d)$ under the action of $SO(n, \mathbb{R})$. This means that if $p \in H_{n,k}$ then $gp \in H_{n,k}$ for any $g \in SO(n, \mathbb{R})$. Furthermore, there is no smaller nonempty subset of $H_{n,k}$ with this property. 

\subsubsection{Tensor products}\label{sec:prelimtensorproducts} The basic idea of tensor products enters into our picture via the tensor products of a representation. A representation of a group $G$ is a vector space $V$ together with a group homomorphism $\rho: G \to \text{End}(V)$ such that $\rho(g_1) \rho(g_2) = \rho(g_1 \circ g_2)$ for any $g_1, g_2 \in G$. Given two representations $(V_1, \rho_1)$ and $(V_2, \rho_2)$, the tensor product representation is given by $\rho_1 \otimes \rho_2 : G \to \text{End}(V_1 \otimes V_2)$ such that $(\rho_1 \otimes \rho_2)(g_1 \otimes g_2) = \rho_1(g) \otimes \rho_2(g)$.

In general, the tensor product representations are \emph{reducible} because in the space $V_1 \otimes V_2$, there are invariant subsets for which acting by the tensor product representation only maps to other points in the subset. Representations without nontrivial invariant subspaces are \emph{irreducible}. The Clebsch-Gordan (CG) decomposition refers to the task of decomposing a tensor product into its irreducible representations. We examine the particular case of $SL(2,\mathbb{R})$.

The space of symmetric tensors in $F(2,d) \otimes F(2,d)$ is isomorphic as an $SL(2, \mathbb{R})$ module to the space of $(d+1) \times (d+1)$ symmetric matrices, $S^{d+1}$, on which the action is congruence by $A^{[d]}$,
\begin{equation*}
A^{[d]}X A^{[d]^T},    
\end{equation*}
for $A \in SL(2, \mathbb{R}), X \in S^{d+1}$. The correspondence is explicit if we identify polynomials with their coefficients (meaning $p = x^{[d]^T}c$ is identified with the vector $c$) and compute the tensor products as Kronecker products ($p \otimes p = \text{kron}(c, c)$, written also as $c \otimes c$). 
For example, in the unscaled monomial basis if
\begin{equation*}
A = \pmat{    5   & 28&   -26\\
28   &173  &-178\\
-26  &-178  & 200} = \pmat{1 \\ 2\\ 2}\pmat{1 \\ 2\\ 2}^T + \pmat{2 \\ 13\\ -14}\pmat{2 \\ 13\\ -14}^T,
\end{equation*}
$p = x^2 + 2xy + 2y^2$ and $q = 2x^2 + 13 xy - 14y^2$, then $A$ corresponds to $p \otimes p + q \otimes q$. Of course, there are other factorizations of $A$, and those give sums of tensors that in total are equal to the element $p\otimes p + q\otimes q$ in $F(2, d) \otimes F(2,d)$.

\subsubsection{Clebsch-Gordan Decomposition of $SL(2, \mathbb{R})$}\label{sec:cgdecomp}
We recall the Clebsch-Gordan (CG) decomposition of $SL(2, \mathbb{R})$ and give a new way of computing it. The finite-dimensional irreducible representations of $SL(2, \mathbb{R})$ are $F(2, d)$ for every nonnegative integer $d$. The Clebsch-Gordan decomposition is given by 
\begin{equation}\label{eqn:cgsl2}
    F(2, d_1) \otimes F(2, d_2) \simeq F(2, d_1 + d_2) \oplus F(2, d_1 + d_2 - 2) \cdots \oplus F(2,0)
\end{equation}
and realized explicitly by the classical \emph{transvectant}. Recall Cayley's $\Omega$ process
\begin{equation*}
    \Omega = \frac{\partial}{\partial x_1} \otimes \frac{\partial}{\partial x_2} - \frac{\partial}{\partial x_2} \otimes \frac{\partial}{\partial x_1},
\end{equation*} so that the $n$th transvectant map is given by
\begin{equation*}\label{eqn:transvectant}\psi_n(p,q) := Tr(\Omega^n (p \otimes q)), \end{equation*} where the trace operation means to treat the tensor product instead as standard multiplication (in effect, removing the tensor operation). 
When the tensor is symmetric, which is the case in the rest of this paper, the odd order transvectant will vanish due to its anti-symmetry. These formulas are mentioned in many places, e.g. \cite[\S 5]{olver1999classical}, \cite{bohning2010clebsch, lawrence2023learning} where the major differences relate to scaling factors and the way in which tensor products are represented. 

By identifying $S^{d+1}$ with $F(2,d) \otimes F(2,d)$ as explained in Section~\ref{sec:prelimtensorproducts}, it is possible to define a transvectant operation directly on $A \in S^{d+1}$ by applying the differentiation operators in $\Omega^2$ to the basis directly. The scaling matrices here are given for the \emph{unscaled} monomial basis. Let $D_1$, $D_2$, and $D_3$ be $(d-1) \times (d-1)$ diagonal matrices such that $(D_1)_{ii}=(d-i+1)(d-i)$, $(D_2)_{ii} = i (i+1)$, and $(D_3)_{ii} = (d-i)i$.
% \begin{equation}\begin{aligned}D_1 &= \text{diag}(d(d-1), (d-1)(d-2), \dots, 3(2), 2(1))\\
% D_2 &= \text{diag}(2(1), 3(2),\dots, (d-1)(d-2), d(d-1))\\D_3 &= \text{diag}((d-1)(1), (d-2)(2), \dots, 2(d-2), 1(d-1))\end{aligned}\end{equation}
The notation $A_{R,C}$ means to delete the $r\in R$ rows and $c \in C$ columns from $A$. For an example of this operator being applied, see Example~\ref{example:thmcomputation}. \begin{defn} For $d \geq 2$, $\mathcal{T}: S^{d+1} \to S^{d-1}$ is given by
\begin{equation}
    d^2 (d-1)^2 \mathcal{T}(A) =  D_1 A_{\{d,d+1\},\{1,2\}} D_2 - 2D_3 A_{\{1,d+1\},\{1,d+1\}}D_3 + D_2 A_{\{1,2\},\{d,d+1\}} D_1.
\end{equation}
\end{defn}

Three equivalent viewpoints can express how a matrix may be supported only on certain irreps of $SL(2)$. The first two are based on the interpretation of \eqref{eqn:cgsl2} as the decomposition of a matrix $A$ in terms of binary forms
\begin{equation}\label{eqn:transvectant-matrix-decomp}
    A \leftrightarrow x^{[d]^T}Ax^{[d]} + x^{[d-2]^T}\mathcal{T}(A)x^{[d-2]} + x^{[d-4]^T}\mathcal{T}^2(A)x^{[d-4]} + \cdots + \mathcal{T}^{d/2}(A),
\end{equation}
which is interpreted to end at $\mathcal{T}^{d/2}$ if $d$ is even and $x^T \mathcal{T}^{(d-1)/2}(A)x$ if $d$ is odd. We say that a matrix is supported in some of the irreps of $SL(2)$ if only the corresponding forms in \eqref{eqn:transvectant-matrix-decomp} are nonzero.  
%We use $\mathcal{T}^j$ to mean compose $\mathcal{T}$ with itself $j$ times. Each successive input dimension will determine what $d$ to use in the definition. 
A similar viewpoint is that if $A$ is in the kernel of $\mathcal{T}^j$ for some $j$, then it cannot be supported in the ``rest'' of the irreps because applying $\mathcal{T}$ to a zero matrix will again give a zero matrix. 
A final formulation of this notion of support is that if $A = \sum_{i=1}^{d+1} s_i a_i a_i^T$ (for $s_i = \pm 1$) then perhaps only for some $k$ is $\sum_{i} s_i \psi_k(x^{[d]^T}a_i, x^{[d]^T}a_i)$ nonzero. We may interpret the statements in Section~\ref{sec:gram-matrix-structure} with any of these viewpoints. 

The only operation missing to interchange matrices and tensor products of polynomials is an operation between matrices that is equivalent to multiplying the corresponding polynomials. In Section~\ref{sec:tools} we prove existence of such an operation by explicitly constructing it.

\section{Building Gram matrices}\label{sec:tools} % Don't really need G?
We discuss two tools that can make explicit our construction of Gram matrices with special properties. These tools are already useful in their own right, as demonstrated in \cite{johnston2023hierarchy}. We give slightly simpler and more explicit formulas for these constructions, but the central ideas are not new. 

%Useful for making the decomposition explicit but not necessary for the existence! We only need that $\Vert x \Vert^k$ is a sum of powers of linear forms and $\tot$ exists.

%In this section we elaborate on our two main tools. They are independently interesting and were both also central to the techniques in \cite{johnston2023hierarchy}. These include a characterization of a linear equivariant map from polynomials to Gram matrices and an appropriately symmetrized tensor product. 

\subsection{Canonical Gram matrix}

The first tool is a principled, explicit construction of a Gram matrix for a polynomial. In contrast to the proposal by \cite{lasserre2001global, parrilo2000structured} of using a semidefinite programming (SDP) solver to find a psd Gram matrix to certify a polynomial is a sum of squares, we could just make a (good) guess for such a matrix. One such guess is as follows.

The only linear equivariant maps from $F(2,d) \to S^{d+1} = F(2,d) \otimes F(2,d) \simeq F(2,2d) \oplus \cdots \oplus F(2, 0)$ are zero on all the components other than $F(2,d)$ by Schur's Lemma. The scale factor allowed by Schur's lemma should be 1 so that the matrix is actually a Gram matrix. While the existence of such a map was mentioned as early as \cite{cukierman2007positive}, we recount an explicit construction of the so-called \emph{Canonical} Gram matrix $G[p]$ as given by \cite{parrilo2021canonical}. Let $\gamma$ be a multi-index with $x^\gamma = \prod_{i=1}^n x_i^{\gamma_i}$ and let $p = c_\gamma x^\gamma \in F(n,2d)$, then $G[p]$ is the symmetric ${n + d \choose d} \times {n + d \choose d}$ matrix with rows and columns indexed by monomials so that
\begin{equation*}
    (G[p])_{\alpha, \beta} = c_{\alpha + \beta}\frac{\sqrt{\alpha! \beta!} }{(\alpha + \beta)!},
\end{equation*}
where $\alpha! = \frac{(\sum_i \alpha_i)!}{\prod_i (\alpha_i!)}$ is the multinomial coefficient with the standard factorials in the numerator and denominator. In the unscaled monomial basis case, the only change is to remove the square root in the numerator. The most notable property of $G$ is that by construction
\begin{equation}
    G[p(Ax)] = A^{[d]^T}G[p]A^{[d]}.
\end{equation}

While such a map has many potential applications, we use it to easily construct a psd Gram matrix for any form that is a sum of powers of linear forms. This construction provides a psd Gram matrix for such forms because $G$ is linear in its argument and it is rank one for a power of linear form. 
\begin{prop}\label{prop:Gpoweroflinearform}
    Let $c \in \mathbb{R}^n$. In the scaled monomial basis, $G[(c^T x)^{2d}]=c^{[d]} c^{[d]^T}$.
\end{prop}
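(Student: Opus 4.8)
The plan is to reduce everything to a direct computation: expand $(c^Tx)^{2d}$ by the multinomial theorem, read off its coefficients, and plug them into the defining formula for the canonical Gram matrix. Write $\mathbf{c}^\gamma = \prod_{i=1}^n c_i^{\gamma_i}$, and recall that in the paper's convention $\gamma! = \tfrac{(\sum_i \gamma_i)!}{\prod_i \gamma_i!}$ is a \emph{multinomial coefficient}. The multinomial theorem then gives
\begin{equation*}
(c^Tx)^{2d} = \sum_{|\gamma| = 2d} \gamma!\, \mathbf{c}^\gamma\, x^\gamma,
\end{equation*}
so the coefficient of $x^\gamma$ in $(c^Tx)^{2d}$ is exactly $c_\gamma = \gamma!\,\mathbf{c}^\gamma$. (If $c = 0$ both sides of the claimed identity are zero, so one may assume $c \neq 0$.)

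Next I would substitute these coefficients into the definition of $G$. For multi-indices $\alpha,\beta$ with $|\alpha| = |\beta| = d$ we have $|\alpha+\beta| = 2d$, and the definition of $G$ in the scaled monomial basis together with the formula just derived gives
\begin{equation*}
\bigl(G[(c^Tx)^{2d}]\bigr)_{\alpha,\beta} = c_{\alpha+\beta}\,\frac{\sqrt{\alpha!\,\beta!}}{(\alpha+\beta)!} = (\alpha+\beta)!\,\mathbf{c}^{\alpha+\beta}\,\frac{\sqrt{\alpha!\,\beta!}}{(\alpha+\beta)!} = \mathbf{c}^{\alpha+\beta}\sqrt{\alpha!\,\beta!}.
\end{equation*}
The last step is to recognize this as an outer product. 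Since $\mathbf{c}^{\alpha+\beta} = \mathbf{c}^{\alpha}\mathbf{c}^{\beta}$, the entry factors as $(\sqrt{\alpha!}\,\mathbf{c}^{\alpha})(\sqrt{\beta!}\,\mathbf{c}^{\beta})$; and in the scaled monomial basis the $\alpha$-th entry of the lift $c^{[d]}$ is precisely the $\alpha$-th scaled monomial $\sqrt{\alpha!}\,x^\alpha$ evaluated at $x = c$, i.e.\ $(c^{[d]})_\alpha = \sqrt{\alpha!}\,\mathbf{c}^{\alpha}$. Hence $\bigl(G[(c^Tx)^{2d}]\bigr)_{\alpha,\beta} = (c^{[d]})_\alpha (c^{[d]})_\beta$ for all $\alpha,\beta$, which is the claimed identity.

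Every step here is a substitution, so there is no genuine obstacle; the only thing demanding care is bookkeeping with the nonstandard factorial notation (each $\alpha!$ is a multinomial coefficient, not a product of ordinary factorials) so that the $(\alpha+\beta)!$ factors cancel correctly, together with the scaling convention that identifies $\sqrt{\alpha!}\,\mathbf{c}^\alpha$ with $(c^{[d]})_\alpha$. As an alternative, coordinate-free argument, one can instead invoke the equivariance property $G[p(Ax)] = A^{[d]^T}G[p]A^{[d]}$: for $c \neq 0$ pick an invertible $A$ with $A^Tc = e_1$, so that $(c^Tx)^{2d}$ pulls back to $x_1^{2d}$, for which the defining formula trivially gives $G[x_1^{2d}] = e_1 e_1^T = e_1^{[d]}e_1^{[d]^T}$; transporting this rank-one identity back through $(A^{[d]})^{-1}$ and using $(A^{-1})^{[d]}e_1^{[d]} = (A^{-1}e_1)^{[d]}$ recovers $c^{[d]}c^{[d]^T}$. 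I would present the direct calculation as the main proof, since it sidesteps having to track how transposes and inverses interact with the lift $A \mapsto A^{[d]}$ in the scaled basis.
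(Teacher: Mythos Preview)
Your proof is correct and follows essentially the same approach as the paper: expand $(c^Tx)^{2d}$ by the multinomial theorem, substitute into the defining formula for $G$, cancel the $(\alpha+\beta)!$ factor, and recognize the result as the outer product $c^{[d]}c^{[d]^T}$. The paper's version is just more terse; your additional remarks on the alternative equivariance argument are a nice supplement but not needed.
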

The proof of this proposition is the explicit calculation provided by \cite{parrilo2021canonical}.
\begin{proof}
First, note that $(c^T x)^{2d} = \sum_{\gamma} \gamma! c^\gamma x^\gamma$. Then $G[(c^T x)^{2d}]_{\alpha, \beta} = (\alpha + \beta)! c^{\alpha + \beta} \frac{\sqrt{\alpha! \beta!}}{(\alpha + \beta)!} = \sqrt{\alpha! \beta!} c^{\alpha + \beta} = c^{[d]} c^{[d]^T}$.  
\end{proof}

The construction of $G$ allows us to state the simplest case of the kernel of $\mathcal{T}$, as mentioned at the end of Section~\ref{sec:cgdecomp}.
\begin{prop}
$\text{ker}(\mathcal{T}) = \{G[p] \, | \, p \in F(2,2d)\}$.
\end{prop}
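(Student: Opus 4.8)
The plan is to prove the two inclusions separately, using the dimension-counting / representation-theoretic structure of $\mathcal{T}$ together with the equivariance property of $G$.

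For the inclusion $\{G[p] : p \in F(2,2d)\} \subseteq \ker(\mathcal{T})$, I would argue as follows. The map $p \mapsto G[p]$ is a linear map $F(2,2d) \to S^{d+1}$, and by the equivariance identity $G[p(Ax)] = A^{[d]^T} G[p] A^{[d]}$ it is an $SL(2)$-module homomorphism, where $F(2,2d)$ carries its irreducible action and $S^{d+1} \simeq F(2,2d) \oplus F(2,2d-2) \oplus \cdots \oplus F(2,0)$ carries the congruence action. By Schur's lemma, the image of $G$ can only meet the $F(2,2d)$ isotypic component (this is essentially the remark preceding Proposition~\ref{prop:Gpoweroflinearform}). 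On the other hand, $\mathcal{T}$ is, by construction, the equivariant projection-type map $S^{d+1} \to S^{d-1}$ realizing the transvectant $\psi_2$, so it kills the top component $F(2,2d)$ and is injective on the sum of the lower components $F(2,2d-2) \oplus \cdots$ — more precisely, $\ker(\mathcal{T})$ is exactly the copy of $F(2,2d)$ sitting inside $S^{d+1}$ under the decomposition \eqref{eqn:transvectant-matrix-decomp}, since a matrix $A$ lies in $\ker\mathcal{T}$ iff all terms after the first vanish in that decomposition. Hence $\mathrm{im}(G) \subseteq \ker(\mathcal{T})$. To make this airtight I would verify that $G$ is not identically zero (e.g. via Proposition~\ref{prop:Gpoweroflinearform}, $G[(c^Tx)^{2d}] = c^{[d]}c^{[d]^T} \neq 0$), so that by irreducibility of $F(2,2d)$ the map $G$ is injective and its image is a full copy of $F(2,2d)$.

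For the reverse inclusion $\ker(\mathcal{T}) \subseteq \{G[p] : p \in F(2,2d)\}$, I would use dimensions. We have $\dim F(2,2d) = 2d+1$ and, from the Clebsch–Gordan decomposition, $\dim S^{d+1} = \binom{d+2}{2} = \sum_{k=0}^{d}(2k+1)$ with $\dim S^{d-1} = \binom{d}{2}$, so the top graded piece $F(2,2d)$ has dimension exactly $\dim S^{d+1} - \dim S^{d-1} = (2d+1)$. Since $\mathcal{T}$ is surjective onto $S^{d-1}$ (which follows from the fact that the decomposition \eqref{eqn:transvectant-matrix-decomp} is an isomorphism, so $\mathcal{T}$ hits every lower component), the rank–nullity theorem gives $\dim \ker(\mathcal{T}) = (2d+1)$. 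Combined with the forward inclusion and $\dim\, \mathrm{im}(G) = 2d+1$, the two subspaces coincide.

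**Main obstacle.** The delicate point is justifying that $\ker(\mathcal{T})$ is precisely the top component $F(2,2d)$ of $S^{d+1}$ and that $\mathcal{T}$ is surjective — i.e. tying the explicitly-defined matrix operator $\mathcal{T}$ of the Definition to the abstract transvectant $\psi_2$ and hence to the grading in \eqref{eqn:transvectant-matrix-decomp}. If that identification is taken as established in Section~\ref{sec:cgdecomp} (it is asserted there that $\mathcal{T}$ realizes the decomposition), then the argument above is essentially immediate; otherwise one must check the scaling constants $D_1,D_2,D_3$ and the row/column deletions actually compute $\psi_2$ on the polynomial side, which is a routine but bookkeeping-heavy computation. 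A cleaner alternative for the forward inclusion that sidesteps abstraction: compute $\mathcal{T}(G[(c^Tx)^{2d}])$ directly and show it vanishes for all $c$, then invoke linearity of $\mathcal{T}$ and $G$ together with the fact that powers of linear forms span $F(2,2d)$ — this reduces everything to a single explicit (if tedious) identity, which I expect to be the main computational hurdle.
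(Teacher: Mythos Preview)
Your proposal is correct, and in fact your ``cleaner alternative'' is exactly what the paper does: it observes that transvectants of powers of linear forms vanish (so $G[(c^Tx)^{2d}]\in\ker\mathcal{T}$ for all $c$, hence $\{G[p]:p\in F(2,2d)\}\subseteq\ker\mathcal{T}$ by linearity and the fact that such powers span), and then invokes the rank--nullity bound $\dim\ker\mathcal{T}\le 2d+1$ to force equality. The Schur's-lemma route you sketch first is also valid and arrives at the same place, but the paper opts for the concrete vanishing argument rather than the abstract equivariance one.
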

\begin{proof}
Since $\text{dim}(\text{ker}(\mathcal{T})) = \text{dim}(S^{d+1}) - \text{rank}(\mathcal{T}) \leq \frac{(d+1)(d+2)}{2} - \frac{d(d-1)}{2} = 2d+1$, this provides an upper bound on the dimension of the subspace. On the other hand, the transvectants of powers of linear forms vanish, so these matrices are all in the kernel. Therefore, this is the whole kernel.
\end{proof}

\subsection{Symmetric tensors} 
We define the symmetric tensor product $\tot$ below for $p, q \in F(n,2d)$ so that it satisfies
\begin{equation}
x^{[d]^T}(M(p) \tot M(q))x^{[d]} = p(x) \cdot q(x)
\end{equation}
in the unscaled monomial basis and preserves certain spectral properties of $M(p)$ and $M(q)$, stated precisely in Lemma~\ref{lem:spectral-preserve}. Here we write $M(r)$ to mean \emph{any} Gram matrix for $r$. 
\begin{defn}
 Let $n$ be the ambient polynomial dimension. Let $a= \binom{d_1 + n - 1}{d_1}$ and $b = \binom{d_2 + n - 1}{d_2}$.  Suppose $A \in S^{a}$ and $B \in S^{b}$. Each row and column of $A$ ($B$) is indexed by a monomial in $n$ variables of degree $d_1$ ($d_2$). Define $A \tot B$ to be the symmetric matrix with side length $\binom{d_1 + d_2 + n- 1}{d_1 + d_2}$ whose entries are indexed by monomials in $n$ variables with degree $d_1 + d_2$ and given by
    \begin{equation}
        (A \tot B)_{(\alpha, \beta)} = \sum_{\substack{\alpha_1 + \alpha_2 = \alpha \\
        \beta_1 + \beta_2 = \beta}}A_{(\alpha_1,\beta_1)} B_{(\alpha_2, {\beta_2})}.
    \end{equation}
\end{defn}
This symmetric tensor product is a symmetrized version of the standard Kronecker product $A \otimes B$. Each row or column is indexed by a monomial, say $\gamma_i$. A row indexed by $\gamma$ in $A \tot B$ is the sum of rows in $A \otimes B$ such that $\gamma_i + \gamma_j = \gamma$. The same collection $\{(i,j)\}$ indexes the columns that should be summed to get column $\gamma$. This procedure gives us the matrix representing the product of the polynomials represented by each of the factors. %We can If $\alpha_j + \alpha_k = \gamma_1$, then can be written as (padded) sums of submatrices of $A \otimes B$, the standard Kronecker product. For example, fixing $\alpha_2$ and $\beta_2$, we get a submatrix of $A \otimes B$ corresponding to $A B_{(x^{\alpha_2}, x^{\beta_2})}$. If the monomial indices of the first coordinate of $A$ originally ranged from $\gamma_1$ to $\gamma_2$, the new indices range from $\gamma_1 + \alpha_2$ to $\gamma_2 + \alpha_2$, which are some subset of the $\alpha$ in $A \tot B$. We can pad $AB_{(x^{\alpha_2}, x^{\beta_2})}$ with zeros for the indices that are not between $\gamma_1 + \alpha_2$ and $\gamma_2 + \alpha_2$. Repeating this process with every entry of $B$ gives us submatrices that include every entry of $A \tot B$, which we get upon adding these submatrices together.  
\begin{prop}\label{prop:tot-multiply-polys}
    Let $p \in F(n, d_1)$, $q \in F(n,d_2)$, and $d = d_1 + d_2$. If $A = M(p)$ and $B = M(q)$ then $x^{[d]^T} (A \tot B) x^{[d]} = p(x) q(x)$. 
\end{prop}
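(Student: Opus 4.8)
The plan is to reduce the claimed identity to the defining property of a Gram matrix, $p(x) = x^{[d_1]^T} A x^{[d_1]}$ and $q(x) = x^{[d_2]^T} B x^{[d_2]}$ (valid for \emph{any} Gram matrices $A,B$ in the unscaled monomial basis), and then expand both sides as formal sums over multi-indices. On the right, multiplying the two forms gives
\[
p(x)q(x) = \sum_{\alpha_1,\beta_1,\alpha_2,\beta_2} A_{\alpha_1,\beta_1} B_{\alpha_2,\beta_2}\, x^{\alpha_1+\beta_1+\alpha_2+\beta_2},
\]
where $\alpha_1,\beta_1$ range over degree-$d_1$ exponents and $\alpha_2,\beta_2$ over degree-$d_2$ exponents. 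On the left, $x^{[d]^T}(A\tot B)x^{[d]} = \sum_{\alpha,\beta}(A\tot B)_{\alpha,\beta}\,x^{\alpha+\beta}$, and substituting the definition of $A\tot B$ and relabelling $\alpha = \alpha_1+\alpha_2$, $\beta=\beta_1+\beta_2$ produces exactly the same sum. The only thing to check is that the decompositions $\alpha = \alpha_1+\alpha_2$ with $|\alpha_1|=d_1$, $|\alpha_2|=d_2$ range over precisely the index set appearing in the product on the right, so the whole argument is a reindexing.

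A cleaner way to package the same computation, which I would probably use in the writeup, is to realize $\tot$ as a compression of the Kronecker product. Let $S$ be the $0/1$ matrix whose rows are indexed by degree-$d$ exponents $\gamma$ and whose columns are indexed by pairs $(\gamma_1,\gamma_2)$ of degree-$d_1$ and degree-$d_2$ exponents, with $S_{\gamma,(\gamma_1,\gamma_2)} = 1$ iff $\gamma_1+\gamma_2=\gamma$. Then directly from the definitions one gets $A\tot B = S(A\otimes B)S^T$ and $S^T x^{[d]} = x^{[d_1]}\otimes x^{[d_2]}$ (the $(\gamma_1,\gamma_2)$ entry of each side of the latter is $x^{\gamma_1+\gamma_2}$). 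Combining these with the mixed-product property $(u\otimes v)^T(A\otimes B)(u\otimes v) = (u^TAu)(v^TBv)$ yields
\[
x^{[d]^T}(A\tot B)x^{[d]} = (x^{[d_1]}\otimes x^{[d_2]})^T (A\otimes B)(x^{[d_1]}\otimes x^{[d_2]}) = p(x)q(x),
\]
and symmetry of $A\tot B$ is inherited from that of $A$ and $B$ via the form $S(A\otimes B)S^T$.

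There is no real obstacle here; the content is entirely bookkeeping. The one place to be careful is the choice of basis: the identity as stated holds in the \emph{unscaled} monomial basis, because there the entries of $x^{[d_1]}$, $x^{[d_2]}$, $x^{[d]}$ are bare monomials and the products $x^{\alpha_1}x^{\beta_1}x^{\alpha_2}x^{\beta_2}$ recombine with no spurious multinomial factors; in the scaled basis one would have to conjugate by the diagonal conversion matrices on both sides and carry the extra constants. I would also state explicitly that $A$ and $B$ may be \emph{any} Gram matrices for $p$ and $q$ (not necessarily psd, canonical, or rank one), since the argument uses only that $x^{[d_i]^T}(\cdot)x^{[d_i]}$ evaluates to the correct form.
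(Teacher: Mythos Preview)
Your first argument is exactly the paper's proof: expand $x^{[d]^T}(A\tot B)x^{[d]}$ using the definition of $\tot$, reindex the double sum over $(\alpha,\beta)$ with $\alpha=\alpha_1+\alpha_2$, $\beta=\beta_1+\beta_2$ into a quadruple sum over $(\alpha_1,\beta_1,\alpha_2,\beta_2)$, and factor into $p(x)q(x)$. Your alternative packaging via $A\tot B = S(A\otimes B)S^T$ and $S^Tx^{[d]}=x^{[d_1]}\otimes x^{[d_2]}$ is not used in the paper's proof of this proposition, but the paper does describe $\tot$ in precisely these projection-of-Kronecker terms in the surrounding discussion and later uses that viewpoint to prove the psd-preservation lemma, so this is a natural and equally valid route.
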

\begin{proof}Consider the expansion
\begin{equation}
        \begin{aligned}
        x^{[d]^T} (A \tot B) x^{[d]} &= \sum_{\alpha, \beta}  x^{\alpha + \beta}\sum_{\substack{\alpha_1 + \alpha_2 = \alpha \\
        \beta_1 + \beta_2 = \beta}}A_{( {\alpha_1},  {\beta_1})} B_{( {\alpha_2},  {\beta_2})} \\
        &= \sum_{\alpha_1,\beta_1,\alpha_2,\beta_2}(x^{\alpha_1}A_{( {\alpha_1},  {\beta_1})}  x^{\beta_1})(x^{\alpha_2}B_{( {\alpha_2},  {\beta_2})} x^{\beta_2})\\
        &= \sum_{\alpha_1,\beta_1}(x^{\alpha_1}A_{( {\alpha_1},  {\beta_1})}  x^{\beta_1})\sum_{\alpha_2, \beta_2}(x^{\alpha_2}B_{( {\alpha_2},  {\beta_2})} x^{\beta_2})\\
        &= p(x) q(x).
    \end{aligned}
\end{equation}
\end{proof}
\begin{example}
    Let $A = \pmat{1 & 0 & 1\\ 0 & 2 & 0\\ 1 & 0 & 3}$ be a gram matrix of $p = x^4 + 4 x^2 y^2 + 3y^4$ and $B = \pmat{1 & 0\\ 0 & 0}$ be a gram matrix of $q = x^2$. (In this example we use the unscaled monomial basis). Then
    \begin{equation}
        A \tot B = \pmat{1 & 0 & 1 & 0 \\ 0 & 2 & 0 &0 \\ 1 & 0 & 3 & 0\\0 & 0 & 0 &0}
    \end{equation}
    is a gram matrix for $x^6 + 4x^4y^2 + 3x^2y^4 = p \cdot q$. 
\end{example}
The operation $\tot$ gives a systematic way to calculate a gram matrix of a polynomial given gram matrices of its factors. Versions of this operation have appeared before. For example, \cite{johnston2023hierarchy} denotes this operation by projection of the tensor product onto the symmetric part, and \cite{elsner2011bialternate} explicitly defines a similar product that differs from our definition by scaling in the case $d_1 = d_2 = 1$. % Different scaling
This matrix operation enjoys several additional convenient properties.

\begin{prop}\label{prop:tot}
    Let $A \in S^a$, $B \in S^b$, $C \in S^c$, and $E \in S^e$. The following statements are true.
    \begin{enumerate}
        \item $A \tot B = B \tot A$.
        \item If $b = c$, then $A \tot (B + C) = A \tot B + A \tot C$.
    \end{enumerate}
\end{prop}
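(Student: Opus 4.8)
The plan is to verify both identities directly from the defining formula for $\tot$, since each one reduces to a reindexing or a term-by-term expansion of a finite sum. The only genuine bookkeeping is to confirm that the matrices being compared have the same shape -- the same side length and the same monomial index set -- so that an entrywise comparison is meaningful.

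For the first item, I would begin by noting that $A \tot B$ and $B \tot A$ are both symmetric matrices of side length $\binom{d_1 + d_2 + n - 1}{d_1 + d_2}$ with rows and columns indexed by degree-$(d_1 + d_2)$ monomials in $n$ variables, so it suffices to compare entries. Fixing multi-indices $\alpha, \beta$ of degree $d_1 + d_2$, I would write out
\begin{equation*}
(B \tot A)_{(\alpha,\beta)} = \sum_{\substack{\alpha_1 + \alpha_2 = \alpha \\ \beta_1 + \beta_2 = \beta}} B_{(\alpha_1,\beta_1)} A_{(\alpha_2,\beta_2)}
\end{equation*}
and apply the change of summation variables $\alpha_1 \leftrightarrow \alpha_2$ and $\beta_1 \leftrightarrow \beta_2$, which is a bijection of the index set $\{(\alpha_1,\alpha_2,\beta_1,\beta_2) : \alpha_1 + \alpha_2 = \alpha,\ \beta_1 + \beta_2 = \beta\}$ onto itself. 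Since scalar multiplication commutes, the relabeled sum is precisely $(A \tot B)_{(\alpha,\beta)}$.

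For the second item, I would first observe that the hypothesis $b = c$ forces $B$ and $C$ to be indexed by monomials of the same degree $d_2$: the map $d \mapsto \binom{d + n - 1}{d}$ is strictly increasing for fixed $n$, so equality of side lengths implies equality of degrees. Hence $B + C$ is well defined and lies in $S^b$, and all three of $A \tot (B + C)$, $A \tot B$, and $A \tot C$ share the side length $\binom{d_1 + d_2 + n - 1}{d_1 + d_2}$. Then, for fixed $\alpha, \beta$, I would expand
\begin{equation*}
\bigl(A \tot (B + C)\bigr)_{(\alpha,\beta)} = \sum_{\substack{\alpha_1 + \alpha_2 = \alpha \\ \beta_1 + \beta_2 = \beta}} A_{(\alpha_1,\beta_1)}\bigl(B_{(\alpha_2,\beta_2)} + C_{(\alpha_2,\beta_2)}\bigr),
\end{equation*}
distribute over the sum, and split it into the two sums defining $(A \tot B)_{(\alpha,\beta)}$ and $(A \tot C)_{(\alpha,\beta)}$.

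I do not expect any real obstacle: item~1 is a reindexing and item~2 is linearity of a finite sum in one factor. The only place to be slightly careful is the shape-matching preamble -- making sure the index sets agree so that ``entrywise equal'' is the correct notion of equality -- and, in item~2, recording why $b = c$ pins down the common degree of $B$ and $C$. Both points follow at once from the definition and the monotonicity of binomial coefficients.
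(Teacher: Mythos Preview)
Your proposal is correct and follows the same approach as the paper: item~1 is a reindexing (commutativity of addition and multiplication) and item~2 is distributivity, verified entrywise from the defining formula. The paper's own proof is a one-line appeal to these same algebraic facts; your version simply spells out the bookkeeping (shape-matching, monotonicity of $d \mapsto \binom{d+n-1}{d}$ for $n\ge 2$) that the paper leaves implicit.
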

\begin{proof}
    The first property follows from commutativity of addition and multiplication. The second follows from distributivity.
\end{proof}
% \begin{prop}
%     If $a \in \mathbb{R}^{n_1}$ and $b \in \mathbb{R}^{n_2}$ then $(a \otimes a) \tot (b \otimes b) = $
% \end{prop}

For our purposes, the most important property of $A \tot B$ is that it preserves the following spectral property of $A$ and $B$.
\begin{lem}\label{lem:spectral-preserve}
        If $A, B \succeq 0$ then $A \tot B \succeq 0$.
\end{lem}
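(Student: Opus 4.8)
The plan is to realize $A \tot B$ as a congruence transformation of the ordinary Kronecker product $A \otimes B$, and then invoke two standard facts: the Kronecker product of positive semidefinite matrices is positive semidefinite, and positive semidefiniteness is preserved under congruence (exactly the property used around \eqref{eqn:gram-matrix-equivariance}).

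First I would set up an ``aggregation'' matrix. The rows and columns of $A \otimes B$ are indexed by pairs $(\alpha_1,\alpha_2)$ with $\alpha_1$ a degree-$d_1$ monomial and $\alpha_2$ a degree-$d_2$ monomial, and $(A \otimes B)_{(\alpha_1,\alpha_2),(\beta_1,\beta_2)} = A_{\alpha_1,\beta_1} B_{\alpha_2,\beta_2}$. Let $P$ be the $0$--$1$ matrix whose rows are indexed by degree-$(d_1+d_2)$ monomials $\alpha$, whose columns are indexed by the pairs $(\alpha_1,\alpha_2)$, and with $P_{\alpha,(\alpha_1,\alpha_2)}=1$ exactly when $\alpha_1+\alpha_2=\alpha$. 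Then I would check the identity
\begin{equation*}
A \tot B = P (A \otimes B) P^T,
\end{equation*}
which is a direct expansion: the $(\alpha,\beta)$ entry of $P(A\otimes B)P^T$ equals $\sum_{(\alpha_1,\alpha_2),(\beta_1,\beta_2)} P_{\alpha,(\alpha_1,\alpha_2)}\, A_{\alpha_1,\beta_1}B_{\alpha_2,\beta_2}\, P_{\beta,(\beta_1,\beta_2)}$, and the two indicator factors restrict the sum to $\alpha_1+\alpha_2=\alpha$ and $\beta_1+\beta_2=\beta$, recovering the defining formula for $(A \tot B)_{(\alpha,\beta)}$. Since $A,B\succeq 0$ implies $A\otimes B\succeq 0$, and $P(A\otimes B)P^T$ is a congruence of a psd matrix, we conclude $A \tot B \succeq 0$.

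As an alternative that avoids naming $P$, one can argue directly from spectral decompositions: writing $A=\sum_i u_iu_i^T$ and $B=\sum_j v_jv_j^T$, bilinearity of $\tot$ (Proposition~\ref{prop:tot}, using commutativity to distribute over the first factor as well) gives $A\tot B=\sum_{i,j}(u_iu_i^T)\tot(v_jv_j^T)$, and a one-line computation shows $(uu^T)\tot(vv^T)=ww^T$ where $w_\alpha=\sum_{\alpha_1+\alpha_2=\alpha}u_{\alpha_1}v_{\alpha_2}$ is the coefficient vector of the product polynomial; hence each summand is psd and so is the sum.

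The only real content is the bookkeeping that identifies $\tot$ with a symmetrization of $\otimes$, so there is no genuine obstacle here; the main care required is just keeping the monomial indexing straight when verifying the congruence identity (or, in the alternative, confirming the rank-one computation).
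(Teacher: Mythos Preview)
Your primary argument is essentially the paper's own proof, stated more cleanly: the paper describes iteratively summing the rows and columns of $A\otimes B$ indexed by pairs $(\alpha_1,\alpha_2)$ with $\alpha_1+\alpha_2=\alpha$, noting each such step preserves positive semidefiniteness, whereas you package all of these steps into the single congruence $P(A\otimes B)P^T$. The content is identical.

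Your alternative via spectral decompositions is a genuinely different route. It trades the Kronecker-plus-congruence viewpoint for a direct rank-one computation, and has the side benefit of making explicit that $(uu^T)\tot(vv^T)=ww^T$ with $w$ the convolution of $u$ and $v$; this identity is in fact used later in the paper (in the proof of Lemma~\ref{lem:gptotm}), so your alternative argument dovetails nicely with what follows. The paper's approach, on the other hand, keeps the connection to the standard Kronecker product front and center, which matches how the operation was introduced just before the lemma.
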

\begin{proof}
    As discussed above, we can repeatedly apply the projection operator for each symmetrized row/column $\gamma$. Every one of these operations preserves positive semidefinitess. Since $A \otimes B \succeq 0$ and each partially symmetrized version of $A \otimes B$ preserves this property, we get that the final result $A \tot B$ is positive semidefinite as well. 
\end{proof}

\section{Gram matrix structure}\label{sec:gram-matrix-structure}

In this section, we can combine the matrix constructions to prove a correspondence between irreducible representations of $SO(2)$ and $SL(2)$. Recall the description of a symmetric matrix having support on only some components of the irreps of $SL(2)$ from Section~\ref{sec:cgdecomp}. 
\begin{thm}\label{thm:main}
    Let $p = q_1 \cdot q_2$ be a sum of squares. If $q_1$ has degree $2d_1$ and $q_2$ has degree $2d_2$, and $q_1$ is a sum of powers of linear forms, then there exists a psd Gram matrix for $p$ that is supported only on the $\lceil d_2/2 \rceil +1$ components \begin{equation*}F(2, 2(d_1 + d_2)) \oplus F(2, 2(d_1 + d_2) - 4) \oplus \cdots \oplus F(2, 2d_1 - (2d_2 \pmod 4)).
    \end{equation*}
\end{thm}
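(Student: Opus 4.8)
The plan is to produce the desired certificate explicitly by combining the two tools of Section~\ref{sec:tools}. First I would reduce to the statement that $q_2$ is itself a sum of squares. Since $q_1$ is homogeneous of degree $2d_1$, being a sum of powers of linear forms means $q_1 = \sum_i \lambda_i (c_i^\top x)^{2d_1}$ with $\lambda_i>0$, so $q_1 \geq 0$ on $\mathbb{R}^2$ and vanishes only on finitely many lines; as $p=q_1q_2$ is a sum of squares, $q_2 = p/q_1$ is nonnegative off those lines and hence everywhere by continuity. A nonnegative binary form is a sum of squares, so $q_2 = \sum_j r_j^2$ with $r_j\in F(2,d_2)$, giving a psd Gram matrix $M(q_2):=\sum_j r_j^{[d_2]}r_j^{[d_2]^\top}$. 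I then set $M(p) := G[q_1] \tot M(q_2)$. By Proposition~\ref{prop:Gpoweroflinearform} and linearity of $G$, $G[q_1] = \sum_i \lambda_i c_i^{[d_1]}c_i^{[d_1]^\top}\succeq 0$, and $G[q_1]\in\ker\mathcal{T}$, i.e. it is supported purely on the top component $F(2,2d_1)$ of $S^{d_1+1}$. Lemma~\ref{lem:spectral-preserve} gives $M(p)\succeq 0$, and Proposition~\ref{prop:tot-multiply-polys} gives $x^{[d]^\top}M(p)x^{[d]} = q_1(x)q_2(x) = p(x)$, so $M(p)$ is the required psd Gram matrix except for the support claim.

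The support claim says $\mathcal{T}^{\lceil d_2/2\rceil+1}(M(p))=0$. I would prove it by decomposing $M(q_2)$ into its $SL(2)$-isotypic pieces inside $S^{d_2+1}\simeq \bigoplus_{j\geq 0}F(2,2d_2-4j)$ and using that $\tot$ is $SL(2)$-equivariant (it implements polynomial multiplication) together with the fact that $G[q_1]$ sits purely in $F(2,2d_1)$. By Schur's lemma, $G[q_1]\tot(\text{piece }F(2,2k))$ can only land in components of $S^{d+1}$ that occur in the Clebsch--Gordan series $F(2,2d_1)\otimes F(2,2k)$, and tracking the admissible ranges and parities as $k$ runs over $d_2,d_2-2,\dots$ isolates a band of top components. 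The core estimate, phrased via the third viewpoint of Section~\ref{sec:cgdecomp}, is that $\sum_{i,j}\lambda_i\,\psi_{2k}\!\big((c_i^\top x)^{d_1}r_j,\ (c_i^\top x)^{d_1}r_j\big)=0$ once $2k$ exceeds the threshold set by $d_2$; here I would use that a form carrying a linear factor $\ell^{d_1}$ kills high transvectants, namely $\psi_r(\ell^{d_1}f,\ell^{d_1}g)=0$ for $r>d_1+\min(\deg f,\deg g)$, which drops out of Leibniz's rule applied to $\Omega^r$ since $\ell^{d_1}$ absorbs at most $d_1$ of the $r$ derivatives on each side.

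The main obstacle is exactly this last step: converting "supported on the top $\lceil d_2/2\rceil+1$ components'' into the precise transvectant-vanishing needed and matching the exact cutoff $F(2,2d_1-(2d_2\bmod 4))$, where the parity of $d_2$ enters through which of $F(2,0)$ or $F(2,2)$ is the bottom component of $S^{d_2+1}$. I expect this to be careful bookkeeping rather than a new idea, and I would organize it by first settling the single-power case $q_1=(c^\top x)^{2d_1}$ — where a rotation sends $c^\top x$ to $x$ and the transvectants become fully explicit polynomials in the coefficients of $r_j$ — and then assembling the general case by summing over $i$ and invoking bilinearity of $\psi_{2k}$ to see that only the Gram matrix $M(q_2)$, not the particular sum-of-squares representation $\sum_j r_j^2$, controls the outcome.
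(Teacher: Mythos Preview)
Your construction $G[q_1]\tot M(q_2)$ and the verification that it is a psd Gram matrix for $p$ match the paper's proof exactly. The gap is in the support step.

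The paper obtains the support via Lemma~\ref{lem:gptotm}, which rests on Lemma~\ref{lem:transvectant_linear_form_power0}: for a linear form $\ell$ and $q,r\in F(2,k)$ one has $\Omega^{2(k+1)}(\ell^{d}q\otimes\ell^{d}r)=0$, a threshold \emph{independent of $d$}. The mechanism is a cancellation you do not invoke: in a single application of $\Omega$, the terms in which neither $q$ nor $r$ is differentiated are
\[
q\,\partial_1\ell^{d}\otimes r\,\partial_2\ell^{d}\;-\;q\,\partial_2\ell^{d}\otimes r\,\partial_1\ell^{d}\;=\;0,
\]
since both summands are the same scalar multiple of $q\ell^{d-1}\otimes r\ell^{d-1}$. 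Hence every $\Omega$ necessarily lowers the degree of $q$ or of $r$, and after $2k$ applications (plus one final $\Omega^2$ on a pure $\ell$-power tensor) everything vanishes; this yields $\mathcal{T}^{d_2+1}(G[q_1]\tot M(q_2))=0$. Your stated estimate $\psi_r(\ell^{d_1}f,\ell^{d_1}g)=0$ for $r>d_1+\min(\deg f,\deg g)$ is only the trivial degree bound, valid for \emph{arbitrary} forms of those degrees; with $\deg f=\deg g=d_2$ it gives vanishing only for $r>d_1+d_2$, which is vacuous inside $S^{d_1+d_2+1}$. Your Leibniz justification (``$\ell^{d_1}$ absorbs at most $d_1$ derivatives on each side'') is precisely the naive count that the cancellation above sharpens. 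The Schur/Clebsch--Gordan outline does not bypass this: determining which irreducibles of $S^{d+1}$ actually receive the image of $\tot$ restricted to $F(2,2d_1)\times F(2,2k)$ again comes down to a transvectant computation of this same kind.
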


The main result follows from this property because $\Vert x \Vert^{2d}$ is a sum of powers of linear forms (e.g. \cite{reznick1992sum}), and therefore we also have $G[\Vert x \Vert^{2d}] \succeq 0$ by Proposition \ref{prop:Gpoweroflinearform} or completely independently by Proposition 4 of \cite{johnston2023hierarchy}.  
\begin{cor}\label{cor:main}
    Suppose that $p \in H_{2,0}\Vert x \Vert^{2d} \oplus H_{2,2} \Vert x \Vert^{2d-2} \oplus \cdots \oplus H_{2,2k}\Vert x \Vert^{2d-2k} \subset F_{2,2d}$ is a sum of squares. Then there exists a psd Gram matrix of $p$ whose decomposition is suported on the $\lceil k / 2 \rceil + 1$ components \begin{equation*}F(2, 2d) \oplus F(2, 2d-4) \oplus  \cdots \oplus F(2, 2d-2k - (2k \pmod{4}).\end{equation*}
    %If $p \in F_{n,d} \setminus H_d$ and $p$ is a sum of squares, then there exists a psd gram matrix without support on $V(0)$.
\end{cor}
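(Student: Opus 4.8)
The plan is to deduce the corollary directly from Theorem \ref{thm:main} by choosing the factorization $p = q_1 \cdot q_2$ appropriately. The natural choice is $q_1 = \|x\|^{2(d-k)}$, which has degree $2(d-k)$, so that $d_1 = d-k$, and $q_2$ is a polynomial of degree $2k$, so that $d_2 = k$. Then $d_1 + d_2 = d$, matching the ambient degree. First I would verify that this $q_1$ satisfies the hypothesis of Theorem \ref{thm:main}: indeed $\|x\|^{2m}$ is a sum of powers of linear forms — this is the classical fact cited from \cite{reznick1992sum} and already invoked in the paragraph following the theorem. So the only remaining task is to check that the harmonic support hypothesis on $p$ translates into $q_2$ being well-defined (i.e., that $p$ actually factors this way), and that the conclusion of Theorem \ref{thm:main}, specialized to these parameters, is exactly the claimed support statement.

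The key step is the factorization. If $p \in H_{2,0}\|x\|^{2d} \oplus H_{2,2}\|x\|^{2d-2} \oplus \cdots \oplus H_{2,2k}\|x\|^{2d-2k}$, then by the harmonic decomposition every term has a common factor of $\|x\|^{2(d-k)}$: writing $p = \sum_{j=0}^{k} p_{2j}\|x\|^{2d-2j}$ with $p_{2j} \in H_{2,2j}$, we can factor $p = \|x\|^{2(d-k)}\bigl(\sum_{j=0}^{k} p_{2j}\|x\|^{2(k-j)}\bigr)$. Setting $q_2 = \sum_{j=0}^k p_{2j}\|x\|^{2(k-j)}$, this is a form of degree $2k$, and $p = q_1 q_2$. (One should note that Theorem \ref{thm:main} requires $p$ itself to be a sum of squares, which is given; it does not require $q_2$ to be a sum of squares separately.) Then I would simply substitute $d_1 = d-k$, $d_2 = k$ into the conclusion of Theorem \ref{thm:main}: the number of components is $\lceil d_2/2\rceil + 1 = \lceil k/2 \rceil + 1$, the top component is $F(2, 2(d_1+d_2)) = F(2,2d)$, the components step down by $4$, and the last component is $F(2, 2d_1 - (2d_2 \bmod 4)) = F(2, 2(d-k) - (2k \bmod 4)) = F(2, 2d - 2k - (2k \bmod 4))$, which matches the stated expression exactly.

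I do not expect a serious obstacle here; the corollary is essentially a change of variables in the theorem together with the observation that restricted harmonic support is the same as divisibility by a power of $\|x\|^2$. The one point that deserves a sentence of care is confirming that the arithmetic of the indices lines up — in particular that $2d_1 - (2d_2 \bmod 4)$ with $d_1 = d-k, d_2 = k$ reproduces $2d - 2k - (2k \bmod 4)$ — but this is immediate. If anything, the mild subtlety is purely expository: making clear that we are not assuming $q_2 = \sum p_{2j}\|x\|^{2(k-j)}$ is itself SOS or even nonnegative, only that the product $p$ is SOS, which is all Theorem \ref{thm:main} needs.
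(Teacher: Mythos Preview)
Your proposal is correct and is essentially the paper's own argument: the paper deduces the corollary from Theorem~\ref{thm:main} by taking $q_1=\|x\|^{2(d-k)}$ (a sum of powers of linear forms by \cite{reznick1992sum}) and $q_2=\sum_{j=0}^{k} p_{2j}\|x\|^{2(k-j)}$, and the index bookkeeping you wrote out matches the theorem's conclusion verbatim. Your closing remark is also on point: the hypothesis of Theorem~\ref{thm:main} only asks that $p$ be SOS, and the nonnegativity (hence SOS, in the binary case) of the other factor is recovered inside that theorem's proof, so nothing extra is needed here.
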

% what happens with trig case: T(G[x^{2d}]) = 0 i.e. psi(x^d, x^d) = 0

Next we present a series of technical lemmas that are required to prove this theorem. In the following lemmas, we write $f = 0$ to mean $f$ is \emph{identically zero} so $\frac{d^k}{dx^k} f(x) = 0$ for all $k$ (not just equal to zero for some setting of parameters).
    \begin{lem}\label{lem:transvectant_linear_form_power0}
        Let be $p = (ax_1 + bx_2)^{d}$ be the power of a linear form. If $q,r \in F(2,k)$ then $\Omega^{2(k+1)}(pq \otimes pr) = 0$. 
    \end{lem}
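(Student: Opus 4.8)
The plan is to reduce to the case $p = x_1^d$ by a linear change of variables, and then expand $\Omega^{2(k+1)}$ by the binomial theorem and observe that every term vanishes for an elementary degree reason.

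First I would dispose of the degenerate case: if $(a,b) = (0,0)$ then $p\equiv 0$ and there is nothing to prove, so assume $ax_1+bx_2$ is a nonzero linear form. The key structural input is the classical $GL(2)$-covariance of the $\Omega$ process (see \cite[\S5]{olver1999classical}): for $A\in GL(2)$ one has $\Omega^n\bigl((f\circ A)\otimes(g\circ A)\bigr) = (\det A)^n\,\bigl(\Omega^n(f\otimes g)\bigr)\circ(A\otimes A)$, where $A\otimes A$ denotes applying $A$ to the variables of each tensor factor. Since $\det A\neq 0$, the identity $\Omega^{2(k+1)}(pq\otimes pr)=0$ holds if and only if it holds after replacing $pq,pr$ by $(pq)\circ A,(pr)\circ A$. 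Choosing $A$ so that $(ax_1+bx_2)\circ A = x_1$ (possible because $ax_1+bx_2$ is a nonzero linear form) gives $p\circ A = x_1^d$, $(pq)\circ A = x_1^d\,(q\circ A)$ with $q\circ A\in F(2,k)$, and likewise for $r$. So it suffices to prove the statement when $p=x_1^d$, with $q,r\in F(2,k)$ arbitrary.

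In that normalized situation I would use that the operators $\partial_{x_1}\otimes\partial_{x_2}$ and $\partial_{x_2}\otimes\partial_{x_1}$ commute (they act on different tensor factors), so with $N:=2(k+1)$ the binomial theorem yields
\[
\Omega^N \;=\; \sum_{j=0}^N \binom{N}{j}(-1)^j\,\bigl(\partial_{x_1}^{N-j}\partial_{x_2}^{j}\bigr)\otimes\bigl(\partial_{x_2}^{N-j}\partial_{x_1}^{j}\bigr).
\]
Applying this to $x_1^d q\otimes x_1^d r$, the $j$-th summand is a scalar multiple of $\bigl(\partial_{x_1}^{N-j}\partial_{x_2}^{j}(x_1^d q)\bigr)\otimes\bigl(\partial_{x_2}^{N-j}\partial_{x_1}^{j}(x_1^d r)\bigr)$. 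Since $\partial_{x_2}$ annihilates $x_1^d$, we have $\partial_{x_2}^{j}(x_1^d q)=x_1^d\,\partial_{x_2}^{j}q$ and $\partial_{x_2}^{N-j}(x_1^d r)=x_1^d\,\partial_{x_2}^{N-j}r$. If $j\ge k+1$, then $\partial_{x_2}^{j}q=0$ because $q$ is homogeneous of degree $k$, so the first tensor factor vanishes; if $j\le k$, then $N-j\ge 2(k+1)-k=k+2>k$, so $\partial_{x_2}^{N-j}r=0$ and the second tensor factor vanishes. Every index $j\in\{0,\dots,N\}$ falls into one of these two cases, so all terms vanish and $\Omega^{N}(x_1^d q\otimes x_1^d r)=0$, which is what we needed.

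I do not expect a genuine obstacle here: the binomial expansion and the degree bookkeeping are routine, and the only point that deserves care is invoking the $GL(2)$-covariance precisely — in particular that it holds at the level of the untraced operator $\Omega^n$ (not merely for the transvectant $\psi_n=Tr(\Omega^n(\cdot))$), which is exactly what lets us normalize $p$ to $x_1^d$.
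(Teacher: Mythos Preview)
Your proof is correct, and it takes a genuinely different route from the paper's. The paper works directly with the general $p=(ax_1+bx_2)^d$: it applies $\Omega$ once, expands by the product rule, and observes that the two terms in which neither $q$ nor $r$ gets differentiated cancel exactly because $p$ is a power of a single linear form; hence every surviving summand has at least one of $q,r$ differentiated, and the argument iterates until the $q,r$-degree budget is exhausted. Your approach instead first uses the $GL(2)$-covariance of $\Omega$ (at the untraced level) to normalize to $p=x_1^d$, then expands $\Omega^{2(k+1)}$ binomially and kills each term by counting $\partial_{x_2}$-derivatives. Your version is shorter and avoids the inductive bookkeeping, at the price of importing the covariance identity from outside; the paper's version is entirely self-contained and makes the cancellation mechanism for a general linear form visible. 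Both arguments in fact show the slightly stronger fact that already $\Omega^{2k+1}$ annihilates $pq\otimes pr$.
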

    \begin{proof}
        We argue that each application of the $\Omega$ operator reduces the degree of at least one of the factors $q$ or $r$. The degrees of $q$ and $r$ are at most $k$. Then
    \begin{equation}
            \begin{aligned}
            \Omega(qp \otimes r p) &= (\frac{\partial}{\partial x_1} \otimes \frac{\partial}{\partial x_2} - \frac{\partial}{\partial x_2} \otimes \frac{\partial}{\partial x_1}) (qp \otimes r p)\\ &= ((\frac{\partial q}{\partial x_1}p + q\frac{\partial p}{\partial x_1}) \otimes (\frac{\partial r}{\partial x_2}p + r\frac{\partial p}{\partial x_2}) - (\frac{\partial q}{\partial x_2}p + q\frac{\partial p}{\partial x_2}) \otimes (\frac{\partial r}{\partial x_1}p + r\frac{\partial p}{\partial x_1}))\\
            &= ((\frac{\partial q}{\partial x_1}p + q\frac{\partial p}{\partial x_1}) \otimes (\frac{\partial r}{\partial x_2}p + r\frac{\partial p}{\partial x_2}) - (\frac{\partial q}{\partial x_2}p + q\frac{\partial p}{\partial x_2}) \otimes (\frac{\partial r}{\partial x_1}p + r\frac{\partial p}{\partial x_1})).
            \end{aligned}
        \end{equation}
        The only terms with no derivative of $q$ or $r$ are
        \begin{equation}
        \begin{aligned}
            &q \frac{\partial p}{\partial x_1} \otimes r \frac{\partial p}{\partial x_2} - q \frac{\partial p}{\partial x_2} \otimes r \frac{\partial p}{\partial x_1} \\ &= qda(ax_1 + bx_2)^{d-1} \otimes rbd(ax_1 + bx_2)^{d-1} - qdb(ax_1 + bx_2)^{d-1} \otimes rad(ax_1 + bx_2)^{d-1} \\&= 0.
            \end{aligned}
        \end{equation}
        Each of the other terms is also of the form $pq \otimes pr$ for a new $q,r$ and $d$. Since $\Omega$ is linear, we can apply term by term to see that at least one of the new $q$'s and $r$'s are of a lower degree. Therefore, after $2k$ applications, either one of $q$ or $r$ has been differentiated more than $k$ times (so that the term is zero) or have both been differentiated exactly $k$ times. In that case, we apply once more $\Omega^2(p \otimes p) = 0$ because the original $p$ and all its derivatives are the power of a linear form.

    \end{proof}
 
    \begin{lem}\label{lem:gptotm}
        Let $p \in F(n,2d_1)$ and $M \in S^{d_2 + 1}$. Then 
        \begin{equation}
            \mathcal{T}^{d_2+1}(G[p] \tot M) = 0.
        \end{equation}
    \end{lem}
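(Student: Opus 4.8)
The plan is to reduce everything, by bilinearity, to Lemma~\ref{lem:transvectant_linear_form_power0}. Throughout I would work in ambient dimension $n=2$ (the setting in which $\mathcal{T}$ acts on $G[p]\tot M$) and in the unscaled monomial basis, in which $\mathcal{T}$ and $\tot$ are written. The smallest matrix size encountered along the $d_2+1$ applications of $\mathcal{T}$ is $d_1-d_2$; if $d_1-d_2\le 1$ then $\mathcal{T}^{d_2+1}$ maps into a trivial space and there is nothing to prove, so assume $d_1\ge d_2+2$.

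First I would expand both factors into rank-one pieces of the right shape. The even powers $(c^Tx)^{2d_1}$ of linear forms span $F(2,2d_1)$ (a classical apolarity fact; see e.g. \cite{reznick1992sum}), so write $p=\sum_i \lambda_i (c_i^Tx)^{2d_1}$ with $\lambda_i\in\mathbb{R}$. By linearity of $G$ and the unscaled-basis form of Proposition~\ref{prop:Gpoweroflinearform}, $G[p]=\sum_i \lambda_i\, u_i u_i^T$, where $u_i$ is the coefficient vector of $(c_i^Tx)^{d_1}\in F(2,d_1)$. Diagonalizing $M=\sum_j \mu_j\, v_j v_j^T$, each $v_j$ is the coefficient vector of a form $m_j=v_j^Tx^{[d_2]}\in F(2,d_2)$. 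Since $\tot$ is bilinear (Proposition~\ref{prop:tot} together with the evident identity $(cA)\tot B=c(A\tot B)$), and since a one-line computation from the definition of $\tot$ — the entrywise version of the calculation in Proposition~\ref{prop:tot-multiply-polys} — gives $(u_i u_i^T)\tot(v_j v_j^T)=w_{ij}w_{ij}^T$ with $w_{ij}$ the coefficient vector of the product $(c_i^Tx)^{d_1}m_j(x)\in F(2,d_1+d_2)$, we obtain
\[
G[p]\tot M=\sum_{i,j}\lambda_i\mu_j\, w_{ij}w_{ij}^T .
\]

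Next I would translate from matrices to bivariate forms. Identify a matrix $A\in S^{D+1}$ (here $D=d_1+d_2$) with the bi-form $\sum_{\alpha,\beta}A_{\alpha\beta}x^\alpha y^\beta$; under this identification $\tot$ becomes multiplication of bi-forms, and — this is exactly how $\mathcal{T}$ is built in Section~\ref{sec:cgdecomp} — $\mathcal{T}$ becomes a fixed nonzero scalar multiple of the operator $\Omega^2$ acting on bi-forms (the diagonal matrices $D_1,D_2,D_3$ and the row/column deletions record the factors brought down and the drops in degree caused by the second-order differentiations in $\Omega^2$). Iterating (and using $d_1\ge d_2+2$, so every intermediate size is at least $2$ and the accumulated scalar is nonzero), $\mathcal{T}^{d_2+1}(A)$ corresponds to a nonzero multiple of $\Omega^{2(d_2+1)}$ applied to the bi-form of $A$; in particular $\mathcal{T}^{d_2+1}(A)=0$ whenever $\Omega^{2(d_2+1)}$ annihilates that bi-form. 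The bi-form of $w_{ij}w_{ij}^T$ is $\big((c_i^Tx)^{d_1}m_j(x)\big)\big((c_i^Ty)^{d_1}m_j(y)\big)$, i.e. the symmetric tensor $\big((c_i^Tx)^{d_1}m_j\big)\otimes\big((c_i^Tx)^{d_1}m_j\big)$ in the notation of Lemma~\ref{lem:transvectant_linear_form_power0}.

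Finally I would apply Lemma~\ref{lem:transvectant_linear_form_power0} with the power of a linear form taken to be $(c_i^Tx)^{d_1}$ and $q=r=m_j\in F(2,d_2)$ (so $k=d_2$): it yields $\Omega^{2(d_2+1)}\big(((c_i^Tx)^{d_1}m_j)\otimes((c_i^Tx)^{d_1}m_j)\big)=0$ for every $i,j$. Summing over $i,j$ shows $\Omega^{2(d_2+1)}$ annihilates the bi-form of $G[p]\tot M$, hence $\mathcal{T}^{d_2+1}(G[p]\tot M)=0$. I expect the main obstacle to be the middle step: carefully setting up the isomorphism between $S^{D+1}$ and the symmetric tensors in $F(2,D)\otimes F(2,D)$ and verifying that $\mathcal{T}$, with its scaling matrices, is precisely the matrix realization of $\Omega^2$, so that the scalar relating $\mathcal{T}^{d_2+1}$ to $\Omega^{2(d_2+1)}$ is nonzero. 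The rank-one bookkeeping in the first step is routine, and the last step is a direct citation.
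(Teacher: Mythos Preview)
Your proposal is correct and follows essentially the same route as the paper: decompose $p$ as a signed sum of $2d_1$-th powers of linear forms so that $G[p]$ is a signed sum of rank-one matrices, spectrally decompose $M$, use bilinearity of $\tot$ to reduce to the rank-one case $(u_iu_i^T)\tot(v_jv_j^T)=w_{ij}w_{ij}^T$ with $w_{ij}$ the coefficient vector of $(c_i^Tx)^{d_1}m_j$, then pass from $\mathcal{T}$ to $\Omega^2$ via the matrix/tensor identification and invoke Lemma~\ref{lem:transvectant_linear_form_power0}. Your treatment is in fact slightly more careful than the paper's, since you isolate the degenerate range $d_1\le d_2+1$ and flag that the nonzero scalar relating $\mathcal{T}^{d_2+1}$ to $\Omega^{2(d_2+1)}$ needs the intermediate sizes to stay at least $3$.
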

    \begin{proof}
        First write $p = \sum s_i (p_i^T x)^{2d_1}$, as the decomposition of $p$ into powers of linear forms with $s_i = \pm 1$. Therefore $G[p]=\sum s_i p_i^{[d_1]} p_i^{[d_1]^T}$ (or simply congruent in the unscaled case). %[change everything to scaled monomials otherwise this is just congruence]
        Let us factor $M = \sum c_i m_i m_i^T$. Then 
        \begin{equation}
        \begin{aligned}
        G[p] \tot M &= \sum_{i,j} s_ic_j (p_i^{[d_1]} \otimes p_i^{[d_1]}) \tot (m_j \otimes m_j) 
        \end{aligned}
        \end{equation}
        and because $\mathcal{T}$ is linear we only must prove that $\mathcal{T}^{d_2 + 1}((p_i^{[d_1]} \otimes p_i^{[d_1]}) \tot (m_j \otimes m_j)) = 0.$ The $\alpha, \beta$ entry of $((p_i^{[d_1]} \otimes p_i^{[d_1]}) \tot (m_j \otimes m_j))$ is given by
        \begin{equation}
            \left(\sum_{\alpha_1 + \alpha_2 = \alpha} (p_i^{[d_1]})_{\alpha_1}(m_{j})_{\alpha_2}\right) \left( \sum_{\beta_1 + \beta_2 = \beta} (p_i^{[d_1]})_{\beta_1}(m_{j})_{\beta_2}\right).
    \end{equation} So the whole matrix is the outer product of the two vectors indexed by $\alpha$ and $\beta$ respectively whose entries are the convolutions of the entries of $p_i^{[d_1]}$ and $m_j$, which can be calculated via polynomial multiplication. 
        In the language of polynomials, we therefore compute $\Omega^{2(d_2 + 1)}$ applied to  \begin{equation}(x^{[d_1]^T}p_i^{[d_1]} \cdot x^{[d_2]^T}m_j) \otimes (x^{[d_1]^T}p_i^{[d_1]} \cdot x^{[d_2]^T}m_j)  = ((p_i^Tx)^{d_1} \cdot x^{[d_2]^T}m_j) \otimes ((p_i^Tx)^{d_1} \cdot x^{[d_2]^T}m_j) , \end{equation}which vanishes by Lemma~\ref{lem:transvectant_linear_form_power0}.
    \end{proof}
The main theorem of this section follows directly from the preceding Lemma.
\begin{proof}[Proof of Theorem~\ref{thm:main}] Since $p$ is positive, $q_1$ must be as well. As a nonnegative binary form, $q_1$ is a sum of squares and so has a psd Gram matrix, which we call $M$. Since $q_2$ is a sum of powers of linear forms, $G[q_2]$ is psd by Proposition~\ref{prop:Gpoweroflinearform}. We claim the matrix
\begin{equation}\label{eqn:GtotM}
    G[q_2] \tot M
\end{equation}
has the properties we claimed. By Proposition~\ref{prop:tot-multiply-polys}, this is a valid Gram matrix. By Lemma~\ref{lem:spectral-preserve} it is psd. Finally, by Lemma~\ref{lem:gptotm} it has the claimed support.
\end{proof}

We calculate a simple example of Corollary~\ref{cor:main} to demonstrate the construction used in the proof of Theorem~\ref{thm:main}. 
\begin{example}\label{example:thmcomputation}
    Let $p = (x_1^2 + x_2^2)^3(2x_1^2-2x_1x_2+5x_2^2)$. The Harmonic decomposition, $3.5(x_1^2 + x_2^2)^4 + (1.5x_2^2 - 2x_1x_2 - 1.5x_1^2)(x_1^2 + x_2^2)^3$, is supported on just two components: $H_{2,0}\Vert x \Vert^{8} \oplus H_{2, 2}\Vert x \Vert^{6}$. The psd Gram matrix in \eqref{eqn:GtotM} in the unscaled monomial basis is
    \begin{equation}
        \pmat{1 & 0 & 3/5 & 0\\ 0 & 9/5 & 0 & 3/5 \\ 3/5 & 0 & 9/5 & 0 \\ 0 & 3/5 & 0 & 1} \tot \pmat{2 & -1 \\ -1 & 5}= \pmat{2 & -1 & 6/5 & -3/5 & 0 \\ -1 & 43/5 & -12/5 & 21/5 & -3/5 \\ 6/5 & -12/5 & 63/5 & -12/5 & 3 \\ -3/5 & 21/5 & -12/5 & 11 & -1\\ 0 & -3/5 & 3 & -1 & 5}.
    \end{equation}
    Applying $\mathcal{T}$ with $d = 4$ directly we get $\frac{1}{144}$ times
    \begin{equation}
    \begin{aligned}
       &\pmat{12 & 0 & 0\\ 0 & 6 & 0 \\ 0 & 0 & 2} \pmat{6/5 & -3/5 & 0 \\ -12/5 & 21/5 & -3/5 \\ 63/5 & -12/5 & 3} \pmat{2 & 0 & 0\\ 0 & 6 & 0\\ 0 & 0 & 12} \\
       -&2 \pmat{3 & 0 & 0\\ 0 & 4 & 0 \\ 0 & 0 & 3} \pmat{43/5 & -12/5 & 21/5 \\ -12/5 & 63/5 & -12/5\\ 21/5 & -12/5 & 11} \pmat{3 & 0 & 0\\ 0 & 4 & 0\\ 0 & 0 & 3} \\+ 
       &\pmat{2 & 0 & 0\\ 0 & 6 & 0\\ 0 & 0 & 12} \pmat{6/5 & -12/5 & 63/5\\ -3/5 & 21/5 & -12/5 \\ 0 & -3/5 & 3} \pmat{12 & 0 & 0\\ 0 & 6 & 0 \\ 0 & 0 & 2},  
       \end{aligned}
    \end{equation}
    which is 
    \begin{equation}
        \frac{-1}{40}\pmat{27 & 4 & 7 \\ 4 & 28 & 4 \\ 7 & 4 & 15}.
    \end{equation}
    Applying $\mathcal{T}$ once more with $d = 2$, we get 
    \begin{equation}
        \frac{-1}{40} \cdot \frac{1}{2}(2 \cdot 7 \cdot 2 - 2 ( 1 \cdot 28 \cdot 1) + 2 \cdot 7 \cdot 2) = 0,
    \end{equation}
    and so the Gram matrix is supported on just $\lceil \frac{2}{2}\rceil + 1 = 2$ components as the corollary suggests.
\end{example}
% \begin{example}
%     Let $p = (x_1^2 + x_2^2)^4(2x_1^2 - 2x_1x_2 + 5x_2^2)$. The Harmonic decomposition is $3.5(x_1^2 + x_2^2)^{5} + (1.5x_2^2-2x_1x_2-1.5x_1^2)(x_1^2 + x_2^2)^{4}$, so it is supported on just two components: $H_{2,0}\Vert x \Vert^{10} \oplus H_{2, 2}\Vert x \Vert^{8}$. The psd Gram matrix in \eqref{eqn:GtotM} is
%     \begin{equation}
%         \frac{1}{126}\pmat{ 126 &   0  & 140   &  0  &  30   &  0 \\
%    0 &  350  &   0  & 300  &   0   & 30 \\
%  140  &   0  & 600   &  0  & 300   &  0 \\
%    0 &  300 &    0 &  600  &   0  & 140 \\
%   30 &    0 &  300  &   0 &  350  &   0 \\
%    0 &   30  &   0 &  140 &    0 &  126 } \tot \pmat{2 & -1\\ -1 & 5},\end{equation}
%    or \begin{equation}
% \frac{1}{126}   \pmat{ 252 &   -126 &  280 &   -140 &  60 &    -30 &   0 \\
%  -126 &  1330 &  -490 &  1300 &  -330 &  210 &   -30 \\
%  280 &   -490 &  2950 &  -900 &  2100 &  -330 &  150 \\
%  -140 &  1300 &  -900 &  4200 &  -900 &  1780 &  -140 \\
%  60 &    -330 &  2100 &  -900 &  3700 &  -490 &  700 \\
%  -30 &   210 &   -330 &  1780 &  -490 &  2002 &  -126 \\
%  0 &     -30 &   150 &   -140 &  700 &   -126 &  630}.
%     \end{equation}
%     By direction computation, we see that applying $\mathcal{T}$ twice gives the $3 \times 3$ zero matrix.
% \end{example}
\section{Conclusion}

In this work, we have identified a structural relationship in some positive polynomials that may carry over into the structures of their Gram matrices. While this relationship is currently only demonstrated for the case of binary forms, an analog for polynomials in higher dimensions that are acted on by $SL(n)$ and $SO(n)$ is the subject of future work.

\section*{Acknowledgement} The author would like to thank Pablo Parrilo for many helpful discussions.

\bibliography{references}

\begin{thebibliography}{10}

\bibitem{axler2013harmonic}
Sheldon Axler, Paul Bourdon, and Ramey Wade.
\newblock {\em Harmonic function theory}, volume 137.
\newblock Springer Science \& Business Media, 2013.

\bibitem{bohning2010clebsch}
Christian B{\"o}hning and Hans-Christian Graf~v Bothmer.
\newblock {A Clebsch--Gordan formula for SL3 (C) and applications to rationality}.
\newblock {\em Advances in Mathematics}, 224(1):246--259, 2010.

\bibitem{cukierman2007positive}
Fernando Cukierman.
\newblock Positive polynomials and hyperdeterminants.
\newblock {\em Collectanea Mathematica}, pages 279--289, 2007.

\bibitem{elsner2011bialternate}
Ludwig Elsner and V~Monov.
\newblock The bialternate matrix product revisited.
\newblock {\em Linear algebra and its applications}, 434(4):1058--1066, 2011.

\bibitem{etingof2011introduction}
Pavel~I Etingof, Oleg Golberg, Sebastian Hensel, Tiankai Liu, Alex Schwendner, Dmitry Vaintrob, and Elena Yudovina.
\newblock {\em Introduction to representation theory}, volume~59.
\newblock American Mathematical Soc., 2011.

\bibitem{fawzi2016sparse}
Hamza Fawzi, James Saunderson, and Pablo~A Parrilo.
\newblock Sparse sums of squares on finite abelian groups and improved semidefinite lifts.
\newblock {\em Mathematical Programming}, 160:149--191, 2016.

\bibitem{fulton2013representation}
William Fulton and Joe Harris.
\newblock {\em Representation theory: a first course}, volume 129.
\newblock Springer Science \& Business Media, 2013.

\bibitem{gatermann2004symmetry}
Karin Gatermann and Pablo~A Parrilo.
\newblock Symmetry groups, semidefinite programs, and sums of squares.
\newblock {\em Journal of Pure and Applied Algebra}, 192(1-3):95--128, 2004.

\bibitem{hilbert1888ueber}
David Hilbert.
\newblock Ueber die {D}arstellung definiter {F}ormen als {S}umme von {F}ormenquadraten.
\newblock {\em Mathematische Annalen}, 32(3):342--350, 1888.

\bibitem{humphreys2012introduction}
James~E Humphreys.
\newblock {\em Introduction to Lie algebras and representation theory}, volume~9.
\newblock Springer Science \& Business Media, 2012.

\bibitem{johnston2023hierarchy}
Nathaniel Johnston, Benjamin Lovitz, and Aravindan Vijayaraghavan.
\newblock A hierarchy of eigencomputations for polynomial optimization on the sphere.
\newblock {\em arXiv preprint arXiv:2310.17827}, 2023.

\bibitem{knapp1996lie}
Anthony~W Knapp and Anthony~William Knapp.
\newblock {\em Lie groups beyond an introduction}, volume 140.
\newblock Springer, 1996.

\bibitem{lang2012sl2}
Serge Lang.
\newblock {\em {$SL_2(R)$}}, volume 105.
\newblock Springer Science \& Business Media, 2012.

\bibitem{lasserre2001global}
Jean~B Lasserre.
\newblock Global optimization with polynomials and the problem of moments.
\newblock {\em SIAM Journal on Optimization}, 11(3):796--817, 2001.

\bibitem{lawrence2023learning}
Hannah Lawrence and Mitchell~Tong Harris.
\newblock {Learning Polynomial Problems with SL(2,R)-Equivariance}.
\newblock In {\em The Twelfth International Conference on Learning Representations}, 2023.

\bibitem{marcus1973finite}
Marvin Marcus.
\newblock {\em Finite dimensional multilinear algebra}, volume~23.
\newblock M. Dekker, 1973.

\bibitem{marcus1992survey}
Marvin Marcus and Henryk Minc.
\newblock {\em A survey of matrix theory and matrix inequalities}, volume~14.
\newblock Courier Corporation, 1992.

\bibitem{olver1999classical}
Peter~J Olver.
\newblock {\em Classical invariant theory}.
\newblock Number~44. Cambridge University Press, 1999.

\bibitem{parrilo2021canonical}
Pablo~A. Parrilo.
\newblock {Canonical Gram matrices for sum of squares, and eigenvalue bounds for polynomial optimization}.
\newblock Unpublished manuscript.

\bibitem{parrilo2000structured}
Pablo~A. Parrilo.
\newblock {\em Structured semidefinite programs and semialgebraic geometry methods in robustness and optimization}.
\newblock PhD thesis, California Institute of Technology, 2000.

\bibitem{parrilo2008approximation}
Pablo~A Parrilo and Ali Jadbabaie.
\newblock Approximation of the joint spectral radius using sum of squares.
\newblock {\em Linear Algebra and its Applications}, 428(10):2385--2402, 2008.

\bibitem{reznick1992sum}
Bruce~Arie Reznick.
\newblock {\em Sum of even powers of real linear forms}, volume 463.
\newblock American Mathematical Soc., 1992.

\bibitem{stein1971introduction}
Elias~M Stein and Guido Weiss.
\newblock {\em {Introduction to Fourier analysis on Euclidean spaces}}, volume~1.
\newblock Princeton university press, 1971.

\end{thebibliography}
\bibliographystyle{plain}
\appendix

\end{document}